\newtheorem{Def}{Definition}[section]
\theoremstyle{remark}
\newtheorem{Rem}[Def]{Remark}
\newtheorem*{Ack}{Acknowledgments}
\theoremstyle{plain}
\newtheorem{Th}[Def]{Theorem}
\newtheorem{Prop}[Def]{Proposition}
\newtheorem{Lem}[Def]{Lemma}
\newtheorem{Cor}[Def]{Corollary}
\newtheorem{Fact}[Def]{Fact}
\newcommand{\Z}{\mathbb{Z}}
\newcommand{\C}{\mathbb{C}}
\renewcommand{\P}{\mathbb{P}}
\renewcommand{\H}{\mathbb{H}}
\newcommand{\CL}{\mathcal{L}}
\newcommand{\CO}{\mathcal{O}}
\newcommand{\al}{\alpha }
\newcommand{\be}{\beta }
\newcommand{\ga}{\gamma }
\newcommand{\Ga}{\Gamma }
\newcommand{\vth}{\vartheta }
\newcommand{\vph}{\varphi }
\newcommand{\la}{\lambda }
\newcommand{\La}{\Lambda }
\newcommand{\om}{\omega }
\newcommand{\Om}{\Omega }
\newcommand{\si}{\sigma }
\newcommand{\na}{\nabla }
\newcommand{\op}{\oplus }
\newcommand{\bu}{\bullet}
\newcommand{\tpi}{2\pi \sqrt{-1}}
\newcommand{\pii}{\pi \sqrt{-1}}
\newcommand{\bfe}{\mathbf{e}}
\DeclareMathOperator{\reg}{reg}
\DeclareMathOperator{\cn}{cn}
\DeclareMathOperator{\sn}{sn}
\DeclareMathOperator{\dn}{dn}
\DeclareMathOperator{\cs}{cs}
\DeclareMathOperator{\ds}{ds}
\DeclareMathOperator{\ns}{ns}
\newcommand{\h}{{\rm h}}
\newcommand{\ch}{{\rm ch}}
\newcommand{\lf}{{\rm lf}}
\newcommand{\cpt}{{\rm c}}
\def\tp#1{\mathord{\mathopen{{\vphantom{#1}}^t}#1}} 
\title[Twisted (co)homology for the Wirtinger integral]{
  Notes on twisted homology and cohomology groups for the Wirtinger integral
}
\author[Y. Goto]{Yoshiaki Goto}
\address[Goto]{
Otaru University of Commerce, 
3-5-21, Midori, Otaru, Hokkaido, 047-8501, JAPAN
}
\email{goto@res.otaru-uc.ac.jp}
\author[G. Shibukawa]{Genki Shibukawa}
\address[Shibukawa]{
Kitami Institute of Technology,
165, Koen-cho, Kitami, Hokkaido, 090-8507, JAPAN
}
\email{g-shibukawa@mail.kitami-it.ac.jp}
\keywords{
Wirtinger integral; 
Theta function; 
Twisted homology groups; 
Twisted cohomology groups; 
Intersection numbers.
}
\subjclass[2020]{
33C99,  
33C05, 
14K25, 
55N25. 
}
\date{\today}
\begin{document}
\begin{abstract}
  The Wirtinger integral is one of the integral representations of the Gauss hypergeometric function. 
  Its integrand is given by a product of complex powers of theta functions. 
  We study the structure of the twisted homology and cohomology groups associated with this integral. 
  Using the involution on the complex torus, 
  we show that these groups decompose into eigenspaces 
  which are orthogonal with respect to the intersection forms. 
  Each eigenspace is related to the twisted (co)homology group 
  associated with the Euler-type integral representation of the Gauss hypergeometric function. 
  We also show that the corresponding intersection matrices admit simple forms. 
\end{abstract}

\maketitle

\section{Introduction}
The Gauss hypergeometric function ${}_2 F_1 (\al,\be,\ga;z)$ admits an Euler-type integral representation
\begin{align}
  \label{eq:gauss-integral}
  {}_2 F_1 (\al,\be,\ga;z)=\frac{\Ga(\ga)}{\Ga(\al) \Ga(\ga-\al)} 
  \int_0^1 t^{\al} (1-t)^{\ga-\al} (1-zt)^{-\be} \frac{dt}{t(1-t)} .
\end{align}
If we set $z=\la(\tau)$ ($\tau \in \H$), ${}_2 F_1 (\al,\be,\ga;\la(\tau))$ 
also admits the representation
\begin{align*}
  \int_0^{\frac{1}{2}} \vth_1(u)^{2\al-1} \vth_2(u)^{2\ga-2\al-1} \vth_3(u)^{-2\be+1} \vth_4(u)^{2\be -2\ga +1} du,
\end{align*}
see \S\ref{subsec:theta-function} for details. 
Such an integral is called the Wirtinger integral. 
The integrand naturally defines a local system $\CL$ and its dual $\CL^{\vee}$ on $M$, where $M$ is 
a complex torus with four $2$-torsion points removed. 
The Wirtinger integral is obtained as a pairing between 
the twisted cohomology group $H^1(M;\CL)$ and the twisted homology group $H_1(M;\CL^{\vee})$. 
These groups are studied in 
\cite{Watanabe-elliptic-homology-cohomology}, \cite{Watanabe-wirtinger-diff-eq},  
and their intersection theory is studied in \cite{Ghazouani-Pirio}, \cite{G-RW-intersection}. 
In this paper, we give a more detailed study of these groups and the intersection forms. 
We focus on the involution $\iota :M\to M$ defined by $\iota(u)=-u$. 
This involution induces actions on $H^1(M;\CL)$ and $H_1(M;\CL^{\vee})$, 
which yield eigenspace decompositions 
\begin{align*}
  H^1(M;\CL) = H^{(-1)} \op H^{(1)} ,\quad 
  H_1(M;\CL^{\vee})=H_{(-1)}^{\vee} \op H_{(1)}^{\vee} .
\end{align*}
Although such a decomposition of the twisted (co)homology groups is mentioned in 
\cite[Example 3.1]{Mano-generalization}, 
we show more precisely that
these decompositions are in fact ``orthogonal'' with respect to the intersection forms; 
see Proposition~\ref{prop:cohomology-decomp} and Corollary~\ref{cor:homology-decomp} for details. 
Furthermore, we explicitly construct bases of these eigenspaces 
and compute the corresponding intersection matrices. 

This paper is organized as follows. 
In Section~\ref{sec:preliminaries}, 
we review basic properties of theta functions, 
and results of \cite{Mano-Watanabe}, \cite{Watanabe-elliptic-homology-cohomology} and \cite{Watanabe-wirtinger-diff-eq} 
on the twisted homology and cohomology groups. 
Sections~\ref{sec:cohomology} and \ref{sec:homology} form the central part of this paper. 
We study the involution $\iota$ and the eigenspace decompositions of the twisted homology and cohomology groups. 
In Section~\ref{sec:cohomology}, 
we compute the intersection matrix for a basis $[\phi_1],\dots ,[\phi_4] \in H^1(M;\CL)$ 
constructed following the ideas of \cite{Watanabe-elliptic-homology-cohomology} and \cite{Watanabe-wirtinger-diff-eq}. 
In fact, $[\phi_1],\dots ,[\phi_4]$ form bases of the eigenspaces $H^{(\pm 1)}$. 
The intersection number $\langle [\phi_2],[\phi_2] \rangle_{\ch}$ has a complicated expression 
in terms of theta functions, 
which we reduce to a simple form using elliptic functions. 
In Section~\ref{sec:homology}, 
we first compute the intersection matrix for a basis $[\si_1],\dots ,[\si_4] \in H_1(M;\CL^{\vee})$ 
consisting of twisted cycles similar to those given in \cite{Watanabe-wirtinger-diff-eq}, 
and show that the integrals over these cycles can be expressed in terms of ${}_2 F_1$. 
However, these cycles do not belong to the eigenspaces $H_{(\pm 1)}^{\vee}$. 
We then give an explicit construction of bases of these eigenspaces. 
In Section~\ref{sec:TPR}, 
we obtain twisted period relations 
as a corollary of the explicit forms of the intersection matrices. 
Using our bases of the eigenspaces, these relations are reduced to simple forms 
and can essentially be derived from the twisted (co)homology theory associated with 
the Euler-type integral~(\ref{eq:gauss-integral}).
In Appendix~\ref{sec:direct-proof}, we verify this fact.

\section{Preliminaries}\label{sec:preliminaries}
In this section, we review basic properties of theta functions and of twisted homology and cohomology groups,
which will be used throughout this paper. 

\subsection{Theta function}\label{subsec:theta-function}
For $u\in \C$, we set $\bfe(u)=e^{\tpi u}$. 
We define theta functions 
\begin{align*}
  \vth_1 (u)=\vth_1 (u,\tau)
  &=-\sum_{m\in \Z} \bfe\left(\frac{1}{2}\Big( m+\frac{1}{2} \Big)^2 \tau
  +\Big( m+\frac{1}{2} \Big)\Big( u+\frac{1}{2} \Big)\right), \\
  \vth_2 (u)=\vth_2 (u,\tau)
  &=\sum_{m\in \Z} \bfe\left( \frac{1}{2} \Big( m+\frac{1}{2} \Big)^2 \tau
  +\Big( m+\frac{1}{2} \Big) u \right), \\
  \vth_3 (u)=\vth_3 (u,\tau)
  &=\sum_{m\in \Z} \bfe\left(\frac{1}{2} m^2 \tau
  + m u\right), \\
  \vth_4 (u)=\vth_4 (u,\tau)
  &=\sum_{m\in \Z} \bfe \left( \frac{1}{2} m^2 \tau
  +m\Big( u+\frac{1}{2} \Big)\right), 
\end{align*}
where $u\in \C$ and $\tau \in \H$. 
In this paper, we fix $\tau \in \H$ and frequently use the notation $\vth_j (u)$. 
We note that $\vth_{1}(u,\tau)$, $\vth_{2}(u,\tau)$, $\vth_{3}(u,\tau)$ and $\vth_{4}(u,\tau)$
defined here are equal to $-\vth_{11}(u,\tau)$, $\vth_{10}(u,\tau)$, $\vth_{00}(u,\tau)$ and $\vth_{01}(u,\tau)$
in \cite{Mumford}, respectively. 
We also note that 
$\vth_{j}(u)$ ($j=2,3,4$) can be expressed by $\vth_{1}(u)$: 
\begin{align*}
  \vth_2(u)
  =-\vth_{1}\Big( u-\frac{1}{2} \Big), \quad
  \vth_3(u)
  =\bfe \Big( \frac{1}{2}+\frac{\tau}{8} -\frac{u}{2} \Big)\vth_{1} \Big( u-\frac{1+\tau}{2} \Big) ,\quad
  \vth_4(u)
  =\bfe \Big( \frac{1}{4}+\frac{\tau}{8} -\frac{u}{2} \Big) \vth_{1} \Big( u-\frac{\tau}{2} \Big) .
\end{align*}

Following to \cite{Watanabe-elliptic-homology-cohomology}, 
we substitute the integral in (\ref{eq:gauss-integral}) by 
\begin{align}
  \label{eq:henkan-sn}
  z=\la(\tau)=\frac{\vth_2^4(0)}{\vth_3^4(0)} ,\quad 
  t=\frac{\vth_3^2(0)}{\vth_2^2(0)}\frac{\vth_1^2(u)}{\vth_4^2(u)} .
\end{align}
Then, we have 
\begin{align}
  \nonumber
  &\int_{0}^{1} t^{\al-1} (1-t)^{\ga-\al-1} (1-\la(\tau)t) ^{-\be} dt \\
  \label{eq:W-integral-gauss}
  &=2\pi \vth_3(0)^2\la(\tau)^{\frac{1-\ga}{2}} (1-\la(\tau))^{\frac{\ga-\al-\be}{2}}
    \int_0^{\frac{1}{2}} \vth_1(u)^{2\al-1} \vth_2(u)^{2\ga-2\al-1} \vth_3(u)^{-2\be+1} \vth_4(u)^{2\be -2\ga +1} du .
\end{align}

\subsection{Twisted homology and cohomology groups}\label{subsec:homology-cohomology}
We set $\La_{\tau}=\Z +\Z \tau$, $E=\C/\La_{\tau}$,  
$D=\{ 0,\frac{1}{2},\frac{1+\tau}{2} ,\frac{\tau}{2} \}$ and $M=E-D$. 
We define a multi-valued function $T(u)$ on $M$ by 
\begin{align}
  \nonumber
  T(u)
  &= \vth_1(u)^{2\al} \vth_2(u)^{2\ga-2\al} \vth_3(u)^{-2\be} \vth_4(u)^{2\be -2\ga} \\
  \label{eq:T(u)-def-2}
  &= (\text{constant})\cdot e^{\tpi c_0 u}\vth_1(u-t_1)^{c_1}\vth_1(u-t_2)^{c_2}\vth_1(u-t_3)^{c_3}\vth_1(u-t_4)^{c_4} , 
\end{align}
where
\begin{align}
  \nonumber
  && &t_1=0,& &t_2=\frac{1}{2},& &t_3=\frac{1+\tau}{2},& &t_4=\frac{\tau}{2},& \\
  \label{eq:RW-setting}
  &c_0=\ga, & &c_1=2\al,& &c_2=2\ga-2\al,& &c_3=-2\be,& &c_4=2\be-2\ga.& 
\end{align}
Let $\CO_E(*D)$ (resp. $\Om^1_E(*D)$) denote the sheaf of functions (resp. $1$-forms) 
that are meromorphic on $E$ and holomorphic on $M$. 
Let $\CL$ and $\CL^{\vee}$ be the local systems on $M$ defined by $T(u)^{-1}$ and $T(u)$, respectively;  
$\CL=\C T(u)^{-1}$ and $\CL^{\vee}=\C T(u)$. 
We consider the homology groups $H_i(M;\CL^{\vee})$ and cohomology groups $H^i(M;\CL)$, 
called the twisted homology groups and twisted cohomology groups, respectively. 
Recall that the twisted homology group is defined by 
$H_i(M;\CL^{\vee})=Z_i(M;\CL^{\vee})/B_i(M;\CL^{\vee})$, where $Z_i$ and $B_i$ denote 
the kernel and image of the boundary operator on the twisted chains, respectively (e.g., \cite{AK}). 
We set $\om =d\log T(u)\in \Om^1_E(*D)(E)$. 

Using the expression (\ref{eq:T(u)-def-2}), we obtain basic facts for the twisted (co)homology groups 
by setting $\la=0$ and $n=4$ in the Riemann-Wirtinger integral \cite{Mano-Watanabe}. 
\begin{Fact}[\cite{Mano-Watanabe}, \cite{Watanabe-wirtinger-diff-eq}]\label{fact:vanish-dim}
  Suppose that $c_1,\dots ,c_4$ in (\ref{eq:RW-setting}) are not integers. 
  If $i\neq 1$, then $H_i(M;\CL^{\vee})=0$ and $H^i(M;\CL)=0$. We have 
  \begin{align*}
    \dim H_1(M;\CL^{\vee}) =\dim H^1(M;\CL) =4, \qquad
    H^1(M;\CL)\simeq \Om^1_E(*D)(E)/\na(\CO_E(*D)(E)) ,
  \end{align*}
  where $\na :\CO_E(*D) \to \Om^1_E(*D)$ is defined by $\na f =df +f\om$. 
\end{Fact}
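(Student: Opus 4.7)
The plan is to treat the vanishing, the dimension count, and the algebraic de Rham comparison in turn, all of which are standard ingredients in the theory of twisted (co)homology on a punctured Riemann surface; the assertion is then obtained by specialising the Riemann--Wirtinger setting of \cite{Mano-Watanabe} to $\la=0$ and $n=4$.

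First, for the vanishing in degrees $i\neq 1$: $M$ is an open Riemann surface, hence homotopy equivalent to a finite $1$-dimensional CW complex (a wedge of circles), so $H_{i}(M;\CL^{\vee})=0$ and $H^{i}(M;\CL)=0$ for $i\geq 2$ automatically. For $H_{0}(M;\CL^{\vee})=(\CL^{\vee})_{\pi_{1}(M)}$, the assumption that at least one $c_{j}\notin \Z$ forces the monodromy around $t_{j}$ to act non-trivially on the stalk, so the coinvariants vanish; dually $H^{0}(M;\CL)=0$.

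Second, I would compute the first Betti number via the Euler characteristic. The Euler characteristic of a rank-one local system on a finite CW complex coincides with that of the base space, so
\begin{align*}
\chi(M;\CL^{\vee})=\chi(M)=\chi(E)-\#D=0-4=-4.
\end{align*}
Combined with the vanishing in other degrees, this yields $\dim H_{1}(M;\CL^{\vee})=4$, and dually $\dim H^{1}(M;\CL)=4$. For the algebraic de Rham description I would then invoke Deligne's comparison theorem for the regular singular connection $\na f=df+f\om$ on the trivial line bundle over $M$: the twisted cohomology $H^{*}(M;\CL)$ is computed by the hypercohomology on $E$ of the two-term complex
\begin{align*}
\na\colon \CO_{E}(*D)\longrightarrow \Om^{1}_{E}(*D).
\end{align*}
The sheaves $\CO_{E}(*D)=\varinjlim_{n}\CO_{E}(nD)$ and $\Om^{1}_{E}(*D)=\varinjlim_{n}\Om^{1}_{E}(nD)$ are quasi-coherent with vanishing $H^{i}$ on $E$ for $i\geq 1$ (Serre vanishing for $n\gg 0$, together with the commutation of sheaf cohomology with direct limits). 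Hence the hypercohomology collapses to the cohomology of the two-term complex of global sections, and using $H^{0}(M;\CL)=0$ from the first step yields
\begin{align*}
H^{1}(M;\CL)\simeq \Om^{1}_{E}(*D)(E)/\na\bigl(\CO_{E}(*D)(E)\bigr).
\end{align*}

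The main obstacle will be the de Rham comparison: one must justify that the meromorphic de Rham complex on $E$ with arbitrary poles along $D$ (rather than only holomorphic or logarithmic forms) computes the correct twisted cohomology of $M$, and then verify the sheaf-cohomological vanishings needed to reduce the hypercohomology to its global-sections complex. The vanishing and dimension steps are comparatively formal, relying only on non-triviality of the monodromy and a standard Euler characteristic computation on a four-punctured torus.
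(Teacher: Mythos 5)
Your proposal is correct, but note that the paper does not prove this statement at all: it is imported as a Fact from \cite{Mano-Watanabe} and \cite{Watanabe-wirtinger-diff-eq} ``by setting $\la=0$ and $n=4$ in the Riemann--Wirtinger integral,'' so there is no in-paper argument to compare against. Your three-step route --- (i) vanishing for $i\neq 1$ from the homotopy type of an open surface plus nontriviality of the local monodromy $\bfe(c_j)\neq 1$, (ii) the dimension count $\chi(M;\CL^{\vee})=\chi(M)=-4$, (iii) Deligne's comparison for the regular singular connection $\na=d+\om$ together with the vanishing of $H^{\geq 1}(E,\CO_E(nD))$ and $H^{\geq 1}(E,\Om^1_E(nD))$ to collapse the hypercohomology to the global-sections complex --- is the standard argument and is essentially what the cited references carry out (they obtain the comparison with the algebraic de Rham complex on $E$ and the dimension $n=4$ directly). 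Two minor points worth tightening: for the cohomological dimension you can simply run the same Euler-characteristic argument on $H^{\bullet}(M;\CL)$ rather than appeal to ``duality'' (the natural dual of $H^i(M;\CL)$ is $H_i(M;\CL)$ with the \emph{same} local system, so the word ``dually'' is slightly loose as written); and on the elliptic curve the vanishing $H^1(E,\CO_E(nD))=0$ for $n\geq 1$ is immediate from Serre duality and the triviality of $K_E$, with no need for $n\gg 0$. Your identification of the de Rham comparison as the only genuinely nontrivial step is accurate; note that because one allows poles of arbitrary order along $D$ (the $*D$ complex), no non-resonance condition on the exponents $c_j$ is needed for that comparison, and the hypothesis $c_j\notin\Z$ enters only through the monodromy/vanishing step.
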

Hereafter, we identify $H^1(M;\CL)$ with $\Om^1_E(*D)(E)/\na(\CO_E(*D)(E))$. 
We call an element of $Z_1(M;\CL^{\vee})$ (resp. $\Om^1_E(*D)(E)$) a twisted cycle (resp. cocycle). 
The natural pairing $H^1(M;\CL)\times H_1(M;\CL^{\vee}) \to \C$ given by 
$([\vph] ,[\si]) \mapsto \int_{\si} T(u)\vph$ is non-degenerate, 
which we call the period pairing. 
This pairing yields the Wirtinger integral.

We define twisted (co)cycles that form a basis of the twisted (co)homology group. 
Let $\si_1,\dots ,\si_4$ be twisted cycles defined as 
\begin{align*}
  \si_1 =\reg \Big(0,\frac{1}{2} \Big) , \quad 
  \si_2 =\reg \Big(\frac{1}{2},\frac{1+\tau}{2} \Big) ,\quad
  \si_3 =\reg \Big(\frac{1+\tau}{2},\frac{\tau}{2} \Big) ,\quad 
  \si_4 =\reg (0,1) ,
\end{align*}
where ``reg'' means the regularization of the locally finite cycle. 
Under the setting (\ref{eq:RW-setting}), we can see that 
$\si_1$, $\si_2$, $\si_3$ and $\si_4$ correspond to   
$\ga_{12}$, $\ga_{23}$, $\ga_{34}$ and $\ga_{10}$ in \cite{G-RW-intersection}, respectively. 
For precise definitions, see \cite{G-RW-intersection}, \cite{Mano-Watanabe}. 
We set 
\begin{align*}
  &\phi_1 =\pi \vth_{3}(0)^2 du ,  \quad
    \phi_2 =\pi \vth_{2}(0)^2 \frac{\vth_{4}(u)^2}{\vth_{1}(u)^2}du ,  \\
  &\phi_3 =\pi \vth_{2}(0)^2 \frac{\vth_{3}(u)\vth_{4}(u)}{\vth_{1}(u) \vth_{2}(u)}du
    =d\log \frac{\vth_{2}(u)}{\vth_{1}(u)} ,  \quad
  \phi_4 =\pi \vth_3(0)^2 \frac{\vth_2(u)\vth_4(u)}{\vth_1(u)\vth_3(u)}du
    =d\log \frac{\vth_{3}(u)}{\vth_{1}(u)}. 
\end{align*}
We note that $\phi_3$ and $\phi_4$ coincide with $-\om_1$ and $-\om_3$
in \cite{Watanabe-elliptic-homology-cohomology}, respectively. 
The image of $([\phi_3],[\si_1])$ under the period pairing agrees with  
the Wirtinger integral in (\ref{eq:W-integral-gauss}).

\begin{Fact}[\cite{Watanabe-elliptic-homology-cohomology},\cite{Watanabe-wirtinger-diff-eq}]
  \label{fact:homology-cohomology-basis}
  If $c_0,\dots ,c_4$ in (\ref{eq:RW-setting}) are not integers, 
  $\{ [\si_{j}] \}_{j=1,\dots ,4}$ and $\{ [\phi_{j}] \}_{j=1,\dots ,4}$
  form a basis of $H_1(M;\CL^{\vee})$ and $H^1(M;\CL)$, respectively. 
\end{Fact}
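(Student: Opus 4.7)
My plan is to deduce both basis statements from the general Riemann--Wirtinger framework cited in Fact \ref{fact:vanish-dim}, combined with the non-degenerate period and intersection pairings. Since Fact \ref{fact:vanish-dim} already gives $\dim H_1(M;\CL^{\vee}) = \dim H^1(M;\CL) = 4$, in each case it suffices to prove linear independence of the four given classes.

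For the homology part, I would first make precise the identification noted just before the statement: with the parameters set as in (\ref{eq:RW-setting}) (equivalently $\la = 0$, $n = 4$ in the Riemann--Wirtinger setup), the twisted cycles $\si_1, \si_2, \si_3, \si_4$ match, up to orientation and a fixed branch of $T(u)^{-1}$, the cycles $\ga_{12}, \ga_{23}, \ga_{34}, \ga_{10}$ of \cite{G-RW-intersection}. Once the endpoints and regularization conventions are aligned, the basis assertion for $H_1(M;\CL^{\vee})$ transfers verbatim from \cite{Mano-Watanabe, Watanabe-wirtinger-diff-eq}.

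For the cohomology part, the cleanest approach is via the cohomology intersection pairing, using the formulas recalled in Section \ref{sec:intersecion-numbers} from \cite{Ghazouani-Pirio, G-RW-intersection}: form the $4 \times 4$ intersection matrix of $\{[\phi_i]\}_{i=1}^4$ against an appropriate dual basis and check that its determinant is nonzero. As a sanity guide for this computation, $\phi_1 = \pi \vth_3(0)^2 \, du$ is holomorphic, $\phi_2$ has a double pole at $t_1$ (with zero residue, by an even/odd parity argument), while $\phi_3$ and $\phi_4$ are logarithmic derivatives with simple poles at $\{t_1, t_2\}$ and $\{t_1, t_3\}$ and residues $\mp 1$ respectively; the differing pole supports make it plausible that the four classes are distinct in $\Om^1_E(*D)(E)/\na(\CO_E(*D)(E))$.

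The main obstacle will be the cohomology nondegeneracy, because in the twisted setting residues at $D$ are not invariants of the class: the coboundary $\na f = df + f\om$ shifts them, so linear independence cannot be read off directly from principal parts. To circumvent this I would either push through the intersection computation above using the explicit local formulas, or, alternatively, couple to the homology basis already established via the period pairing: if the $4 \times 4$ matrix $\bigl( \int_{\si_i} T(u) \phi_j \bigr)_{i,j}$ is non-singular, then both bases are established at once, and nondegeneracy can in principle be verified by specializing $\tau$ conveniently and invoking the identification (\ref{eq:W-integral-gauss}) with ${}_2 F_1$.
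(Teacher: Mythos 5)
The paper does not prove this statement at all: it is imported as a Fact from \cite{Watanabe-elliptic-homology-cohomology} and \cite{Watanabe-wirtinger-diff-eq}, with only the remark that $c_0\notin\Z$ is the extra hypothesis needed for the $[\si_j]$ to be a basis. Your proposal therefore goes beyond the paper, and its main route is sound: given $\dim H_1(M;\CL^{\vee})=\dim H^1(M;\CL)=4$ from Fact \ref{fact:vanish-dim}, linear independence of the $[\phi_j]$ does follow from non-degeneracy of the matrix $C$ of Proposition \ref{prop:cohomology-intersection-matrix}, whose computation uses only local Laurent expansions and does not presuppose the basis statement; concretely, $\det C=(\tpi)^4\cdot\frac{-1}{(c_1-1)(c_1+1)}\cdot\frac{c_1+c_2+c_3}{c_1c_2c_3}$, and since $c_1+c_2+c_3+c_4=0$ the second factor equals $-c_4/(c_1c_2c_3)\neq 0$, so $c_1,\dots,c_4\notin\Z$ suffices here. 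For uniformity you should treat the homology side the same way rather than deferring entirely to the references: $\det H\neq 0$ (Fact 3.1) gives linear independence of the $[\si_j]$, and inspection of the fourth row of $H$, whose entries carry the factor $1-\bfe(c_0)$, shows exactly why the extra hypothesis $c_0\notin\Z$ is needed --- this is the point the paper flags after the statement and that your sketch leaves implicit inside ``transfers verbatim.'' Your observation that residues at $D$ are not invariants of twisted classes is correct and is precisely why the intersection pairing is the right tool. The one piece I would discard is the fallback of proving non-singularity of the period matrix $\bigl(\int_{\si_i}T(u)\phi_j\bigr)$ by ``specializing $\tau$'': the entries are transcendental in $\tau$ and the parameters, no convenient specialization is exhibited, and as stated this is not an argument.
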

Here we assume the additional condition $c_0\notin \Z$, which is necessary to show 
that $\{ [\si_{j}] \}_{j=1,\dots ,4}$ form a basis. 
Although we can take another basis under the weaker assumption $c_1,\dots ,c_4\notin \Z$, 
we use $\{ [\si_{j}] \}_{j=1,\dots ,4}$ for computational simplicity.

\subsection{Intersection forms}
The intersection forms $\langle \bu ,\bu \rangle_{\h}$ and $\langle \bu ,\bu \rangle_{\ch}$ are non-degenerate bilinear forms
\begin{align*}
  \langle \bu ,\bu \rangle_{\h} : H_1(M;\CL^{\vee}) \times H_1(M;\CL) \longrightarrow \C ,\quad
  \langle \bu ,\bu \rangle_{\ch} : H^1(M;\CL) \times H^1(M;\CL^{\vee}) \longrightarrow \C .
\end{align*}
For a twisted cycle $\si \in Z_1(M;\CL^{\vee})$, we can construct $\si^{\vee} \in Z_1(M;\CL)$ by 
replacing $(\al,\be,\ga)$ with $(-\al,-\be,-\ga)$ 
(hence each $c_j$ is replaced by $-c_j$ ($j=0,\dots ,4$)). 
By Fact~\ref{fact:vanish-dim}, 
we have $H^1(M;\CL^{\vee})\simeq \Om^1_E(*D)(E)/\na^{\vee}(\CO_E(*D)(E))$,
where $\na^{\vee} f =df -f\om$. 
Thus, we also identify  
$H^1(M;\CL^{\vee})$ with $\Om^1_E(*D)(E)/\na^{\vee}(\CO_E(*D)(E))$. 
\begin{Rem}
  In general, 
  when we define the intersection forms and Poincar\'e duality (used later), 
  we need to consider 
  the locally finite twisted homology group $H_1^{\lf}(M;\CL^{\vee})$ and 
  the twisted cohomology group with compact support $H^1_{\cpt}(M;\CL)$. 
  Since there are natural isomorphisms $H_1(M;\CL^{\vee}) \simeq H_1^{\lf}(M;\CL^{\vee})$ and 
  $H^1_{\cpt}(M;\CL) \simeq H^1(M;\CL)$ (see, e.g., \cite[Theorem~3.4.4]{Dimca}), 
  we identify these groups and omit the labels ``lf'' and ``c''. 
\end{Rem}

\section{Structure of the twisted cohomology group}\label{sec:cohomology}
\subsection{Intersection matrix}
The cohomology intersection numbers can be computed in a manner similar to that in \cite{G-RW-intersection}, 
though we use a different basis.  
\begin{Th}\label{th:cohomology-intersection-matrix}
  The intersection matrix $C=(\langle [\phi_i],[\phi_j] \rangle_{\ch})_{i,j=1,\dots ,4}$
  is given by 
  \begin{align*}
    C=\tpi
    \begin{pmatrix}
      0 & \frac{1}{c_1+1} & 0 & 0 \\
      \frac{1}{c_1-1} & C_{22} & 0 & 0 \\
      0 & 0 & \frac{c_1 +c_2}{c_1 c_2} &\frac{1}{c_1} \\
      0 & 0 & \frac{1}{c_1} & \frac{c_1 +c_3}{c_1 c_3} 
    \end{pmatrix},
  \end{align*}
  where
  \begin{align}
    \label{eq:C22-1}
    C_{22}
    &=\frac{1}{\pi^2 \vth_{3}(0)^4(c_1 -1)(c_1 +1)}
    \Big(-c_1 \frac{\vth_{1}'''(0)}{\vth_{1}'(0)}-c_2\frac{\vth''_{2}(0)}{\vth_{2}(0)} 
    -c_3\frac{\vth''_{3}(0)}{\vth_{3}(0)} +(2c_1-c_4)\frac{\vth''_{4}(0)}{\vth_{4}(0)}\Big) \\
    \label{eq:C22-2}
    &=\frac{(a-b+1)\la(\tau)+c}{2a(a+1)}.
  \end{align}
  Here we set $\al=a+\frac{1}{2}$, $\be=b-\frac{1}{2}$, $\ga=c$.
\end{Th}
\begin{proof}
  For a detailed method to obtain the intersection numbers, 
  see \cite[\S 5.2]{G-RW-intersection}. 
  For the reader's convenience, we write down the first few terms of the Laurent expansions of $\phi_2$ and $f$ around $u=0$, 
  where $f$ satisfies $\na f =\phi_2$: 
  \begin{align*}
    \frac{\phi_2}{du} &=\frac{1}{\pi \vth_{3}^2}\cdot \frac{1}{u^2}
    +\frac{1}{\pi \vth_{3}^2} \left( \frac{\vth_{4}''}{\vth_{4}}-\frac{\vth_{1}'''}{3\vth_{1}'} \right) 
    +\cdots , \\
    f&=\frac{1}{c_1-1}\frac{1}{\pi \vth_{3}^2}\cdot \frac{1}{u}
       +\frac{1}{(c_1-1)(c_1+1)} \frac{1}{\pi \vth_{3}^2} \left(
       \frac{-2c_1+1}{3}\frac{\vth'''_{1}}{\vth'_{1}} -c_2\frac{\vth''_{2}}{\vth_{2}}
       -c_3\frac{\vth''_{3}}{\vth_{3}}+(c_1-c_4-1)\frac{\vth''_{4}}{\vth_{4}} 
       \right) u +\cdots .
  \end{align*}
  Here, we write $\vth'_1=\vth'_1(0)$, $\vth_2=\vth_2(0)$, and so on. 
  By this computation, we obtain the expression (\ref{eq:C22-1}). 
  Since the proof of (\ref{eq:C22-2}) is slightly long, we prove it in \S\ref{subsec:proof-of-C22}. 
\end{proof}

\subsection{Involution and decomposition}\label{subsec:involution-cohomology}
Let $\iota :E \to E$ be the involution defined by $\iota(u)=-u$. 
Since $\iota (D)=D$, 
it also induces an involution on $M$. 
By the relation $\iota^{*}\om =\om$ and Theorem~\ref{th:cohomology-intersection-matrix}, 
we obtain the following proposition. 

\begin{Prop}\label{prop:cohomology-decomp}
  The involution $\iota^{*} :\Om^1_E(*D)(E) \to \Om^1_E(*D)(E)$ induces 
  automorphisms $\iota^{*} :H^1(M;\CL)\to H^1(M;\CL)$ and 
  $\iota^{*\vee} :H^1(M;\CL^{\vee})\to H^1(M;\CL^{\vee})$. 
  These cohomology groups decompose into the eigenspaces: 
  \begin{align*}
    H^1(M;\CL) = H^{(-1)} \op H^{(1)} ,\quad 
    H^1(M;\CL^{\vee}) = H^{(-1)\vee} \op H^{(1)\vee}, 
  \end{align*}
  where $H^{(\pm 1)}=\{ [\vph] \in H^1(M;\CL) \mid \iota^{*}[\vph]=\pm [\vph] \}$, and 
  similarly for $H^{(\pm 1)\vee}$. 
  The cohomology classes $[\phi_1]$ and $[\phi_2]$ (resp. $[\phi_3]$ and $[\phi_4]$) form bases of 
  $H^{(-1)}$ and $H^{(-1)\vee}$ (resp. $H^{(1)}$ and $H^{(1)\vee}$). 
  Further, $H^{(\pm 1)}$ and $H^{(\mp 1)\vee}$ are orthogonal to each other with respect to 
  the intersection form $\langle \bu,\bu \rangle_{\ch}$. 
\end{Prop}
\begin{Rem}
  Although the orthogonality of the eigenspaces is verified 
  by Theorem~\ref{th:cohomology-intersection-matrix}, 
  it also follows from the property 
  $\langle [\vph],[\psi] \rangle_{\ch}=\langle \iota^{*}[\vph],\iota^{*\vee}[\psi]\rangle_{\ch}$. 
\end{Rem}

\begin{Rem}\label{rem:double-cover}
  We set $X=\P_t^1 -\{ 0,1,1/z,\infty \}$ and $U(t)=t^{\al} (1-t)^{\ga-\al} (1-zt)^{-\be}$. 
  For a local system $\CL_0 =\C U(t)^{-1}$, we consider the twisted cohomology group $H^1(X;\CL_0)$
  associated with the integral (\ref{eq:gauss-integral}). 
  The correspondence (\ref{eq:henkan-sn}) defines a double covering map $pr:M\to X$. 
  It is easy to see that the image of 
  $pr^{*}: H^1(X;\CL_0) \to H^1(M;\CL)$
  coincides with $H^{(1)}$. 
  Indeed, $[d\log \frac{1-t}{t}]$ and $[d\log \frac{1-z t}{t}]$, which form a basis of $H^1(X;\CL_0)$, 
  are mapped to $2[\phi_3]$ and $2[\phi_4]$, respectively.   
\end{Rem}

\subsection{Proof of (\ref{eq:C22-2})}\label{subsec:proof-of-C22}
In this subsection, we prove the equality (\ref{eq:C22-2}). 
Recall that we use the substitution $\al=a+\frac{1}{2}$, $\be=b-\frac{1}{2}$, $\ga=c$. 
We set 
\begin{align*}
  C'_{22}&:=2a(a+1)\cdot C_{22} \\
  &=\frac{1}{2\pi^2 \vth_{3}(0)^4 }
    \Big(-(2a+1) \frac{\vth_{1}'''(0)}{\vth_{1}'(0)}+(2a-2c+1)\frac{\vth''_{2}(0)}{\vth_{2}(0)}
    +(2b-1)\frac{\vth''_{3}(0)}{\vth_{3}(0)} +(4a-2b+2c+3)\frac{\vth''_{4}(0)}{\vth_{4}(0)}\Big)  .
\end{align*}
It suffices to show the equality $C'_{22}=(a-b+1)\la(\tau)+c$. 

In the proof, we use the formulas in \cite{WW}. 
We note that the definitions of our theta functions are
slightly different from those defined in \cite[\S 21.11]{WW}.
If we write $\vth_i^{\rm W}$ for the theta function defined in \cite{WW},
then we have $\vth_i(u;\tau)=\vth_i^{\rm W}(\pi u;e^{\pii \tau})$. 
By \cite[\S 21.41]{WW}, we have
\begin{align}
  \label{eq:theta1-0-diff}
  \frac{\vth_1'''(0)}{\vth_1'(0)}
  &=\pi^2 \left( -1+24\sum_{n=1}^{\infty}\frac{q^{n}}{(1-q^{n})^2} \right)
    =\frac{\vth_2''(0)}{\vth_2(0)}+\frac{\vth_3''(0)}{\vth_3(0)}+\frac{\vth_4''(0)}{\vth_4(0)}, \\
  \label{eq:theta2-0-diff}
  \frac{\vth_2''(0)}{\vth_2(0)}
  &=\pi^2 \left( -1-8\sum_{n=1}^{\infty}\frac{q^{n}}{(1+q^{n})^2}  \right), \\
  \label{eq:theta3-0-diff}
  \frac{\vth_3''(0)}{\vth_3(0)}
  &=-8\pi^2 \sum_{n=1}^{\infty}\frac{q^{n-\frac{1}{2}}}{\big( 1+q^{n-\frac{1}{2}} \big)^2} , \\
  \label{eq:theta4-0-diff}
  \frac{\vth_4''(0)}{\vth_4(0)}
  &=8\pi^2 \sum_{n=1}^{\infty}\frac{q^{n-\frac{1}{2}}}{\big( 1-q^{n-\frac{1}{2}} \big)^2} ,
\end{align}
where $q=\bfe (\tau)$.
Using the Eisenstein series of weight $2$:
\begin{align*}
  G_{2}(\tau )
  :=\frac{\pi^2}{3}-8\pi^{2}
  \sum_{n=1}^{\infty}\frac{nq^{n}}{1-q^{n}}
  =\frac{\pi^2}{3}\left( 1-24
  \sum_{n=1}^{\infty}\frac{q^{n}}{(1-q^{n})^2} \right) ,
\end{align*}
we rewrite the formulas (\ref{eq:theta1-0-diff})--(\ref{eq:theta4-0-diff}) as 
\begin{align}
  \label{eq:theta1-0-diff-2}
  \frac{\vth_1'''(0)}{\vth_1'(0)}
  &=-3 G_2(\tau)
    =\pi^2 \left( -1+24
    \sum_{n=1}^{\infty}\frac{nq^{n}}{1-q^{n}} \right), \\
  \label{eq:theta2-0-diff-2}
  \frac{\vth_2''(0)}{\vth_2(0)}
  &=-4G_2 (2\tau) +G_2(\tau)
    =\pi^2 \left( -1+8\sum_{n=1}^{\infty}\frac{(-1)^n nq^{n}}{1-q^{n}} \right), \\
  \label{eq:theta3-0-diff-2}
  \frac{\vth_3''(0)}{\vth_3(0)}
  &=4G_2 (2\tau) -5G_2 (\tau) +G_2 \Big( \frac{\tau}{2} \Big)
    =-8\pi^2 \sum_{n=1}^{\infty}\frac{(-1)^n nq^{\frac{n}{2}}}{1-q^{n}} , \\
  \label{eq:theta4-0-diff-2}
  \frac{\vth_4''(0)}{\vth_4(0)}
  &=G_2 (\tau) -G_2 \Big( \frac{\tau}{2} \Big)
    =8\pi^2 \sum_{n=1}^{\infty}\frac{nq^{\frac{n}{2}}}{1-q^{n}} .
\end{align}
Therefore, $C'_{22}$ is reduced to
\begin{align}
  \nonumber
  \frac{1}{\pi^2 \vth_{3}(0)^4 }
  \Big[
  &2(a-b+c+1)\big( 2G_{2}(2\tau )- G_{2}(\tau ) \big) \\
  \label{eq:LHS-by-E2}
  & \quad 
     -2(a-b+1)\Big(4G_{2}(2\tau )+G_{2}\Big(\frac{\tau }{2}\Big)-4G_{2}(\tau )\Big)
     +c\Big( 2G_{2}(\tau )-G_{2}\Big(\frac{\tau }{2}\Big)\Big)\Big] .
\end{align}
We consider Jacobian elliptic functions 
\begin{align*}
\sn(2Ku)
   :=
   \frac{\vth_{3}(0)}{\vth_{2}(0)}\frac{\vth_{1}(u)}{\vth_{4}(u)},  \quad 
\cn(2Ku)
   :=
   \frac{\vth_{4}(0)}{\vth_{2}(0)}\frac{\vth_{2}(u)}{\vth_{4}(u)},  \quad
\dn(2Ku)
   :=
   \frac{\vth_{4}(0)}{\vth_{3}(0)}\frac{\vth_{3}(u)}{\vth_{4}(u)} ,
\end{align*}
and 
\begin{align*}
   \cs(2Ku)
   :=
   \frac{\cn(2Ku)}{\sn(2Ku)}, \quad
   \ds(2Ku)
   :=
   \frac{\dn(2Ku)}{\sn(2Ku)}, \quad
   \ns(2Ku)
   :=
   \frac{1}{\sn(2Ku)},
\end{align*}
where $K=\pi \vth_{3}(0)^{2}/ 2$.
The elliptic functions $\cs$, $\ds$ and $\ns$ have the following series
\cite[\S 22.61]{WW}:
\begin{align}
  \nonumber
  2K\cs(2Ku)
  &=\pi \cot (\pi u) -4\pi \sum_{n=1}^{\infty} \frac{q^{n} \sin (2n\pi u)}{1+q^{n}} \\
  \label{eq:cs-Laurent-1}
  &=\frac{1}{u}-\frac{\pi^2}{3} \left( 1+24 \sum_{n=1}^{\infty} \frac{nq^{n}}{1+q^{n}} \right) u+\cdots
    , \\
  \nonumber
  2K\ds(2Ku)
  &=\frac{\pi}{\sin (\pi u)} -4\pi
    \sum_{n=1}^{\infty} \frac{q^{n-\frac{1}{2}} \sin ((2n-1)\pi u)}{1+q^{n-\frac{1}{2}}} \\
  \label{eq:ds-Laurent-1}
  &=\frac{1}{u} +\frac{\pi^2}{6}\left( 1-24 \sum_{n=1}^{\infty} \frac{(2n-1)q^{n-\frac{1}{2}}}{1+q^{n-\frac{1}{2}}} \right) u +\cdots
    , \\
  \nonumber
  2K\ns(2Ku)
  &=\frac{\pi}{\sin (\pi u)} +4\pi
    \sum_{n=1}^{\infty} \frac{q^{n-\frac{1}{2}} \sin ((2n-1)\pi u)}{1-q^{n-\frac{1}{2}}} \\
  \label{eq:ns-Laurent-1}
  &=\frac{1}{u} +\frac{\pi^2}{6}\left( 1+24 \sum_{n=1}^{\infty} \frac{(2n-1)q^{n-\frac{1}{2}}}{1-q^{n-\frac{1}{2}}} \right) u +\cdots
    .
\end{align}
From Taylor expansions of $\cn$, $\sn$ and $\dn$ \cite[\S 22.34]{WW}:
\begin{align*}
\sn(2Ku)
   &=
      2Ku
      - \frac{1+\lambda(\tau)}{6} (2K)^{3} u^3
      +\cdots,  \\
\cn(2Ku)
   &=
      1
      -\frac{1}{2}(2K)^2 u^2
      +\cdots, \nonumber \\
\dn(2Ku)
   &=
      1
      - \frac{\lambda (\tau)}{2} (2K)^{2} u^{2}
      + \cdots,
\end{align*}
we have Laurent expansions of $\cs$, $\ds$ and $\ns$:
\begin{align}
  \label{eq:cs-Laurent-2}
  2K \cs(2Ku)
   &=
     \frac{1}{u}
     +\left(-\frac{1}{3}+\frac{\lambda (\tau)}{6} \right) (2K)^2 u
     +\cdots,  \\
  \label{eq:ds-Laurent-2}
  2K\ds(2Ku)
   &=
     \frac{1}{u}
     +\left(\frac{1}{6}-\frac{\lambda(\tau)}{3} \right) (2K)^2 u
     +\cdots,  \\
  \label{eq:ns-Laurent-2}
  2K\ns(2Ku)
   &=
     \frac{1}{u}
     +\left(\frac{1}{6}+\frac{\lambda(\tau)}{6} \right) (2K)^2 u
     +\cdots .
\end{align}
Comparing the coefficients of $u$ in
(\ref{eq:cs-Laurent-1})--(\ref{eq:ns-Laurent-1}) and 
(\ref{eq:cs-Laurent-2})--(\ref{eq:ns-Laurent-2}), respectively, we have
\begin{align*}
  1+24 \sum_{n=1}^{\infty} \frac{nq^{n}}{1+q^{n}}
  &=\left(1-\frac{\lambda(\tau)}{2} \right) \vth_3(0)^4,\\
  1-24 \sum_{n=1}^{\infty} \frac{(2n-1)q^{n-\frac{1}{2}}}{1+q^{n-\frac{1}{2}}}
  &=\left(1-2\lambda(\tau) \right) \vth_3(0)^4,\\
  1+24 \sum_{n=1}^{\infty} \frac{(2n-1)q^{n-\frac{1}{2}}}{1-q^{n-\frac{1}{2}}}
  &=\left(1+\lambda(\tau) \right) \vth_3(0)^4 .
\end{align*}
A simple calculation shows that 
\begin{align*}
  &2G_{2}(2\tau )-G_{2}(\tau )
    =\frac{\pi^2}{3} \left(1-\frac{\lambda(\tau)}{2} \right) \vth_3(0)^4, \\
  &4G_{2}(2\tau )+G_{2}\Big(\frac{\tau }{2} \Big)-4G_{2}(\tau )
    =\frac{\pi^2}{3} (1-2\la (\tau)) \vth_3(0)^4, \\
  &2G_{2}(\tau )-G_{2}\Big(\frac{\tau }{2}\Big)
    =\frac{\pi^2}{3} \left(1+\lambda(\tau) \right) \vth_3(0)^4.  
\end{align*}
Substituting the above formulas into (\ref{eq:LHS-by-E2}), we obtain
\begin{align*}
  C'_{22}&=\frac{1}{3}\Big[
  2(a-b+c+1)\left(1-\frac{\lambda(\tau)}{2} \right)
  -2(a-b+1)(1-2\la (\tau)) 
    +c\left(1+\lambda(\tau) \right)\Big] \\
  &=(a-b+1)\la(\tau)+c,
\end{align*}
which completes the proof. 

\section{Structure of the twisted homology group}\label{sec:homology}
\subsection{Intersection matrix and period integrals for $\{ [\si_j] \}$}
The homology intersection numbers are given in \cite{Ghazouani-Pirio} and \cite{G-RW-intersection}. 
Applying (\ref{eq:RW-setting}) to the result in \cite{G-RW-intersection}, we immediately obtain 
the following formula. 
\begin{Fact}[{\cite[Proposition 3.4.1]{Ghazouani-Pirio}, \cite[Corollary 3.3]{G-RW-intersection}}]
  The intersection matrix $H' =(\langle [\si_i],[\si_j^{\vee}]\rangle_{\h})_{i,j=1,\dots ,4}$  
  is given by
  \begin{align*}
    H'=
    \begin{pmatrix}
      \frac{1-\bfe(c_1 +c_2)}{(1-\bfe(c_1))(1-\bfe(c_2))}&-\frac{1}{1-\bfe(c_2)} 
      &0&\frac{\bfe(c_1)(1-\bfe(-c_0))}{1-\bfe(c_1)} \\
      -\frac{\bfe(c_2)}{1-\bfe(c_2)}&\frac{1-\bfe(c_2 +c_3)}{(1-\bfe(c_2))(1-\bfe(c_3))}
      &-\frac{1}{1-\bfe(c_3)}&0 \\
      0&-\frac{\bfe(c_3)}{1-\bfe(c_3)}
      &\frac{1-\bfe(c_3 +c_4)}{(1-\bfe(c_3))(1-\bfe(c_4))}&0 \\
      \frac{1-\bfe(c_0)}{1-\bfe(c_1)}&0
      &0&\frac{(1-\bfe(c_0))(\bfe(c_0)-\bfe(c_1))}{\bfe(c_0)(1-\bfe(c_1))}
    \end{pmatrix}.
  \end{align*}
\end{Fact}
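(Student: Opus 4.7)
The plan is to obtain the matrix entries as a direct specialization of the intersection number formula for the Riemann--Wirtinger integral proved in \cite{G-RW-intersection}. Under the identification $\si_1 \leftrightarrow \ga_{12}$, $\si_2 \leftrightarrow \ga_{23}$, $\si_3 \leftrightarrow \ga_{34}$, $\si_4 \leftrightarrow \ga_{10}$ recorded after the definition of the cycles, together with the parameter dictionary \eqref{eq:RW-setting}, every entry is already computed in \cite{G-RW-intersection}, and the task reduces to substitution and bookkeeping.

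First I would recall the general recipe. For two regularized twisted cycles on a punctured torus whose underlying paths meet transversally, the intersection number is a sum of local contributions at common endpoints, where each endpoint $t_p$ contributes a rational function in $\bfe(c_p)$ dictated by the local monodromy of $\CL$ about $t_p$. For a regularized segment $\reg(t_i,t_j)$ paired with its own dual, the two endpoints combine to give the standard self--intersection
\[
 \frac{1-\bfe(c_i+c_j)}{(1-\bfe(c_i))(1-\bfe(c_j))},
\]
while a shared endpoint between distinct cycles contributes a single factor of the form $-\tfrac{1}{1-\bfe(c_p)}$ or $-\tfrac{\bfe(c_p)}{1-\bfe(c_p)}$, the choice of normalization being fixed by the orientation of the two incoming paths. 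Cycles with no common endpoint are disjoint, hence pair to zero.

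Next I would proceed entry by entry. The $3\times 3$ block indexed by $\si_1,\si_2,\si_3$ is the easiest: these cycles connect consecutive elements of $D$, so the diagonal entries follow from the self--intersection formula with $(c_i,c_j)=(c_1,c_2)$, $(c_2,c_3)$, $(c_3,c_4)$ respectively; the $(1,2)$, $(2,1)$, $(2,3)$, $(3,2)$ entries come from a single shared puncture and produce the $\pm\frac{1}{1-\bfe(c_k)}$ or $\pm\frac{\bfe(c_k)}{1-\bfe(c_k)}$ terms; and the $(1,3)$, $(3,1)$ entries vanish because $\si_1$ and $\si_3$ share no puncture.

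The main obstacle is the row and column indexed by $\si_4=\reg(0,1)$, which is not a segment between two distinct punctures but a nontrivial loop on $E$; its intersection behaviour is governed by the extra exponent $c_0$ coming from the factor $e^{\tpi c_0 u}$ in \eqref{eq:T(u)-def-2}. Identifying $\si_4$ with $\ga_{10}$ and quoting the corresponding entries of \cite[Corollary 3.3]{G-RW-intersection} gives the self--intersection $\tfrac{(1-\bfe(c_0))(\bfe(c_0)-\bfe(c_1))}{\bfe(c_0)(1-\bfe(c_1))}$; the only other nonzero pairings with $\si_4$ are with $\si_1$, as $\si_4$ shares only the puncture $0$ among the $t_p$, yielding the $(1,4)$ and $(4,1)$ entries featuring both $c_0$ and $c_1$, and $0$ for the pairings with $\si_2$ and $\si_3$. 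Assembling these local contributions produces the displayed matrix $H$; no additional analytic input beyond \cite{G-RW-intersection} is required.
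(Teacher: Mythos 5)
Your proposal is correct and matches the paper's treatment: the paper presents this as a Fact obtained by applying the parameter dictionary (\ref{eq:RW-setting}) and the cycle identifications $\si_1,\si_2,\si_3,\si_4 \leftrightarrow \ga_{12},\ga_{23},\ga_{34},\ga_{10}$ to \cite[Corollary 3.3]{G-RW-intersection}, exactly as you do. Your added discussion of the local contributions and of the special role of $c_0$ for the loop $\si_4$ is consistent with that reference but is not needed beyond the substitution the paper itself performs.
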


The period pairing between $\{ [\phi_j] \}_{j=1,\dots ,4}$ and $\{ [\si_j] \}_{j=1,\dots ,4}$ 
can be expressed in terms of ${}_2F_1$. 
\begin{Lem}\label{lem:W-integral-hypergeom}
  We have
  \begin{align}
    \label{eq:integral-phi3-1}
    \int_{\si_1}T(u)\phi_3 
    &=\frac{\Ga(\al)\Ga(\ga-\al)}{2\Ga(\ga)}
      \vth_{2}(0)^{2\ga} \vth_{3}(0)^{-2\al -2\be} \vth_{4}(0)^{-2\ga+2\al+2\be}
      {}_2F_1 (\al ,\be ,\ga ;\la(\tau)) , \\
    \nonumber
    \int_{\si_3}T(u)\phi_3  
    &=-\bfe \Big(\frac{1}{2} (\al +\be-\ga)\Big) \cdot 
      \frac{\Ga(1-\be)\Ga(1-\ga+\be)}{2\Ga(2-\ga)}
       \\
    \label{eq:integral-phi3-3}
    &\qquad  
      \cdot \vth_{2}(0)^{4-2\ga} \vth_{3}(0)^{2\al +2\be-4} \vth_{4}(0)^{2\ga-2\al-2\be}
      {}_2F_1 (1-\be ,1-\al ,2-\ga ;\la(\tau)),\\
    \label{eq:integral-phi3-4}
    \int_{\si_4} T(u)\phi_3 
    &=(1-\bfe(\ga-\al)) \int_{\si_1} T(u)\phi_3 , \\
    \nonumber
    \int_{\si_2}T(u)\phi_3 
    &=-\frac{1}{\bfe(2\al -2\ga)(1-\bfe(\ga))}
      \Bigg( (1-\bfe(\al))\int_{\si_1}T(u)\phi_3 \\
    \label{eq:integral-phi3-2}
    &\qquad \qquad 
      +\bfe(2\al+2\be-2\ga)(1-\bfe(\ga-\be))\int_{\si_3}T(u)\phi_3 \Bigg) .
  \end{align}
  We can obtain formulas for the integrals of $\phi_1$, $\phi_2$ and $\phi_4$ 
  by replacing $\al$, $\be$ and $\ga$ according to the following rule. 
  \begin{align*}
    \begin{array}{c|ccc}
      &\al & \be & \ga \\ \hline
      \phi_1 & \al+\frac{1}{2} & \be+\frac{1}{2} & \ga+1 \\
      \phi_2 & \al-\frac{1}{2} & \be+\frac{1}{2} & \ga \\
      \phi_4 & \al & \be+1 & \ga+1 
    \end{array}
  \end{align*}
\end{Lem}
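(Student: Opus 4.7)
The plan is to first establish the four $\phi_3$-integrals \eqref{eq:integral-phi3-1}--\eqref{eq:integral-phi3-2}, and then deduce the formulas for $\phi_1,\phi_2,\phi_4$ from these by a parameter shift.

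For \eqref{eq:integral-phi3-1} the identity \eqref{eq:W-integral-gauss} already converts $\int_{\si_1}T\phi_3$ to the classical Euler integral $\int_0^1 t^{\al-1}(1-t)^{\ga-\al-1}(1-\la t)^{-\be}dt=\frac{\Ga(\al)\Ga(\ga-\al)}{\Ga(\ga)}{}_2F_1(\al,\be,\ga;\la)$; the residual factor $\la^{(\ga-1)/2}(1-\la)^{(\al+\be-\ga)/2}$ is then rewritten via $\sqrt{\la}=\vth_2(0)^2/\vth_3(0)^2$ and Jacobi's identity $\sqrt{1-\la}=\vth_4(0)^2/\vth_3(0)^2$, producing exactly the theta-constant powers displayed in \eqref{eq:integral-phi3-1}.

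For \eqref{eq:integral-phi3-3} I would substitute $u=v+\tau/2$, turning $\si_3$ into the horizontal interval from $\tfrac{1}{2}$ to $0$, and apply the quasi-periodicity identities $\vth_1(v+\tau/2)=i\bfe(-\tau/8-v/2)\vth_4(v)$, $\vth_2(v+\tau/2)=\bfe(-\tau/8-v/2)\vth_3(v)$, $\vth_3(v+\tau/2)=\bfe(-\tau/8-v/2)\vth_2(v)$, $\vth_4(v+\tau/2)=i\bfe(-\tau/8-v/2)\vth_1(v)$. The universal factors $\bfe(-\tau/8-v/2)$ collect with total weight $c_1+c_2+c_3+c_4=0$, so their $v$-linear part vanishes, and after permuting the theta-functions the integrand becomes a scalar multiple of the $\si_1$-integrand for the parameter triple $(1-\be,1-\al,2-\ga)$. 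Applying \eqref{eq:integral-phi3-1} with this triple yields \eqref{eq:integral-phi3-3}, with the constant $-\bfe(\tfrac{1}{2}(\al+\be-\ga))$ arising from the residual $i$'s and the phases $\bfe(\sum_j c_j t_j/2)$.

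For \eqref{eq:integral-phi3-4} I would split $\si_4$ as $\si_1$ followed by its translate $\si_1+\tfrac{1}{2}$, and on the second piece substitute $u=v+\tfrac{1}{2}$ using $\vth_1(v+\tfrac{1}{2})=\vth_2(v)$, $\vth_2(v+\tfrac{1}{2})=-\vth_1(v)$, $\vth_3(v+\tfrac{1}{2})=\vth_4(v)$, $\vth_4(v+\tfrac{1}{2})=\vth_3(v)$ together with the branch choice $(-1)^{2\ga-2\al}=\bfe(\ga-\al)$: the second integral equals $-\bfe(\ga-\al)$ times $\int_{\si_1}T\phi_3$ computed with parameters $(\ga-\al,\ga-\be,\ga)$, and Euler's transformation ${}_2F_1(\ga-\al,\ga-\be,\ga;\la)=(1-\la)^{\al+\be-\ga}{}_2F_1(\al,\be,\ga;\la)$ converts this back to $-\bfe(\ga-\al)\int_{\si_1}T\phi_3$, giving the factor $1-\bfe(\ga-\al)$ after summation. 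Formula \eqref{eq:integral-phi3-2} is obtained by the same substitution: $\si_2$ becomes the vertical path $(0,\tau/2)$, which can then be expressed via the $u\mapsto u+\tau/2$ analysis used in \eqref{eq:integral-phi3-3}, and collecting the branch phases accumulated at the half-periods $\tfrac{1}{2}$ and $(1+\tau)/2$ produces the explicit coefficients of $\int_{\si_1}T\phi_3$ and $\int_{\si_3}T\phi_3$.

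Finally, for the $\phi_1,\phi_2,\phi_4$ formulas a direct algebraic check gives $T(u;\al,\be,\ga)\phi_j(u)=\frac{\vth_{?}(0)^2}{\vth_2(0)^2}\,T(u;\al_j,\be_j,\ga_j)\phi_3(u)$ with $(\al_j,\be_j,\ga_j)$ the row of the table and $\vth_?(0)^2$ equal to $\vth_3(0)^2$ for $j\in\{1,4\}$ and $\vth_2(0)^2$ for $j=2$. The extra theta-constant is precisely absorbed by the change of the $\la^{(\cdot)/2}(1-\la)^{(\cdot)/2}$ powers appearing in the $\phi_3$-formulas under the parameter shift, so each integral for $\phi_j$ is obtained from the corresponding $\phi_3$-formula by literally substituting the shifted parameters. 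The hardest step is the $\tau/2$-shift used in \eqref{eq:integral-phi3-3} and \eqref{eq:integral-phi3-2}: the identities for $\vth_j(u+\tau/2)$ carry not only a permutation and sign/$i$ factors but also the universal exponential $\bfe(-\tau/8-u/2)$, whose $u$-linear exponent interacts with the non-integer powers $\vth_j^{c_j}$ in $T$, so the correct branch phases $\bfe(\pm c_j/2)$ must be extracted with care.
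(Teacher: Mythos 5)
Your treatment of \eqref{eq:integral-phi3-1}, \eqref{eq:integral-phi3-3} and \eqref{eq:integral-phi3-4} matches the paper's (very terse) proof: \eqref{eq:integral-phi3-1} is a restatement of \eqref{eq:W-integral-gauss}, and the other two follow from half-period substitutions, exactly as you describe. The genuine gap is \eqref{eq:integral-phi3-2}. Note that $\si_1=\reg(0,\frac12)$ and $\si_3=\reg(\frac{1+\tau}{2},\frac{\tau}{2})$ are \emph{horizontal} segments, while $\si_2=\reg(\frac12,\frac{1+\tau}{2})$ is \emph{vertical}; a translation by any half-period preserves the direction of a segment, so no substitution of the type you used for \eqref{eq:integral-phi3-3} can carry $\si_2$ onto $\si_1$ or $\si_3$. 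More fundamentally, \eqref{eq:integral-phi3-2} expresses one integral as a nontrivial \emph{linear combination of two} others with monodromy coefficients $(1-\bfe(\al))$, $(1-\bfe(\ga-\be))$; under the double cover of Remark \ref{rem:double-cover} this is the connection formula relating $\int_1^{1/z}$ to $\int_0^1$ and $\int_{1/z}^{\infty}$, and a change of variables can only ever produce a single term. The paper obtains the relation homologically: the boundary of the $2$-chain given by the parallelogram $0\to\frac12\to\frac{1+\tau}{2}\to\frac{\tau}{2}\to\frac{-1+\tau}{2}\to-\frac12\to 0$ vanishes in $H_1(M;\CL^{\vee})$, and since opposite sides are translates of $\si_1,\si_2,\si_3$ weighted by branch factors of $T$, this yields a linear relation among $[\si_1],[\si_2],[\si_3]$ whose pairing with $[\phi_3]$ is \eqref{eq:integral-phi3-2}. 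Your proposal is missing this idea (or an equivalent use of the classical ${}_2F_1$ connection formulas), and the step as written would fail.

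A smaller point: your identity $T(u;\al,\be,\ga)\phi_j=\kappa_j\,T(u;\al_j,\be_j,\ga_j)\phi_3$ with $\kappa_1=\kappa_4=\vth_3(0)^2/\vth_2(0)^2$ and $\kappa_2=1$ is correct, but the conclusion you draw from it is not internally consistent. For $\phi_1$ the shift $\ga\mapsto\ga+1$ multiplies the theta-constant block of \eqref{eq:integral-phi3-1} by $\la(\tau)^{1/2}=\vth_2(0)^2/\vth_3(0)^2$, which cancels $\kappa_1$; the net effect is that the $\Ga$-factors and the ${}_2F_1$ get the shifted parameters while the theta-constant exponents stay as in the $\phi_3$ formula. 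That is not the same as ``literally substituting the shifted parameters'' into the whole of \eqref{eq:integral-phi3-1} (the two readings differ by $\vth_2(0)^2/\vth_3(0)^2$ for $\phi_1$ and $\phi_4$), so you should state precisely which factors the substitution is applied to.
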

\begin{proof}
  The equality (\ref{eq:integral-phi3-1}) is the same as (\ref{eq:W-integral-gauss}). 
  Integration by substitution shows 
  the equalities (\ref{eq:integral-phi3-3}) and (\ref{eq:integral-phi3-4}). 
  To show (\ref{eq:integral-phi3-2}), it suffices to consider the boundary of 
  a (locally finite) $2$-chain defined by a parallelogram surrounded by the path
  $(0 \to \frac{1}{2} \to \frac{1+\tau}{2} \to \frac{\tau}{2} \to \frac{-1+\tau}{2} \to -\frac{1}{2} \to 0)$. 
\end{proof}

\subsection{Involution and decomposition}
The involution $\iota$ introduced in \S\ref{subsec:involution-cohomology} 
naturally derives an involution on the twisted homology groups. 
By the compatibility among the period pairings and the intersection forms, 
we also obtain eigenspace decompositions of the twisted homology groups. 
\begin{Cor}\label{cor:homology-decomp}
  The twisted homology groups $H_1(M;\CL^{\vee})$ and $H_1(M;\CL)$ decompose as 
  \begin{align*}
    H_1(M;\CL^{\vee})=H_{(-1)}^{\vee} \op H_{(1)}^{\vee} ,\quad  
    H_1(M;\CL)=H_{(-1)}\op H_{(1)}.  
  \end{align*}
  These direct summands are characterized by the following properties: 
  \begin{align*}
    H_{(\pm 1)}^{\vee} &=\left\{ [\si]\in H_1(M;\CL^{\vee}) \, \left| \,
    \int_{\si} T(u) \vph =0 \ \text{ for all } \ [\vph] \in H^{(\mp 1)} \right. \right\}, \\
    H_{(\pm 1)} &=\left\{ [\si]\in H_1(M;\CL) \, \left| \,
    \int_{\si} T(u)^{-1} \vph =0 \ \text{ for all } \ [\vph] \in H^{(\mp 1)\vee} \right. \right\}. 
  \end{align*}
  Further, $H_{(\pm 1)}^{\vee}$ and $H_{(\mp 1)}$ are orthogonal to each other with respect to 
  the intersection form $\langle \bu ,\bu \rangle_{\h}$. 
\end{Cor}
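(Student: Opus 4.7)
The plan is to define $H_{(\pm 1)}^\vee$ and $H_{(\pm 1)}$ by the vanishing conditions appearing in the statement, so that the ``characterization'' becomes tautological, and then to establish the direct sum decompositions and the $I_h$-orthogonality in two separate steps.

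For the direct sum decomposition, I will exploit the non-degenerate period pairing between $H^1(M;\CL)$ and $H_1(M;\CL^\vee)$ together with the cohomology decomposition from the preceding proposition. With $H_{(\pm 1)}^\vee$ defined as the annihilator of $H^{(\mp 1)}$ under this pairing, non-degeneracy immediately gives $\dim H_{(\pm 1)}^\vee = 4 - \dim H^{(\mp 1)} = 2$. Moreover, $H_{(-1)}^\vee \cap H_{(1)}^\vee$ is the annihilator of $H^{(-1)} + H^{(1)} = H^1(M;\CL)$, which vanishes. A dimension count then yields $H_1(M;\CL^\vee) = H_{(-1)}^\vee \oplus H_{(1)}^\vee$, and the symmetric argument (with $\CL$ and $\CL^\vee$ exchanged) handles $H_1(M;\CL)$.

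For the orthogonality, I will introduce the involution on homology. Since $\iota^*\om = \om$, the map $\iota$ lifts (after fixing a branch of $T$) to involutive automorphisms $\iota_*^\vee$ on $H_1(M;\CL^\vee)$ and $\iota_*$ on $H_1(M;\CL)$, and the period pairing satisfies the compatibility
\[
  \int_{\iota_*^\vee \si} T(u)\,\vph \;=\; \int_\si T(u)\, \iota^*\vph .
\]
For $[\si]\in H_{(\vep)}^\vee$ and $[\vph]\in H^{(\eta)}$ this identity gives
\[
  \int_{\iota_*^\vee \si - \vep\si} T(u)\,\vph \;=\; (\eta - \vep)\int_\si T(u)\,\vph ,
\]
which vanishes for every $[\vph]$ (either $\eta = \vep$, or $\eta = -\vep$ and the integral is zero by the definition of $H_{(\vep)}^\vee$). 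Non-degeneracy of the period pairing then forces $\iota_*^\vee \si = \vep \si$, so $H_{(\pm 1)}^\vee$ and $H_{(\pm 1)}$ are identified with the $(\pm 1)$-eigenspaces of $\iota_*^\vee$ and $\iota_*$ respectively. The intersection form $I_h$ is $\iota$-invariant because $\iota$ is an orientation-preserving self-homeomorphism of $M$ compatible with both local systems, so for $[\si]\in H_{(\vep)}^\vee$ and $[\si^\vee]\in H_{(-\vep)}$ one obtains $I_h(\si,\si^\vee) = \vep(-\vep)\, I_h(\si,\si^\vee) = -I_h(\si,\si^\vee)$, hence $I_h(\si,\si^\vee) = 0$.

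The main obstacle is the lift of $\iota$ to the local systems in the third step: the branch relation $T(-u) = \bfe(\al)\, T(u)$ means the lift is only defined up to a normalization constant, and one has to pin down that constant consistently so that both the displayed compatibility of the period pairing with $\iota_*^\vee$ and $\iota^*$ and the corresponding $\iota$-invariance of $I_h$ hold with matching signs. Once those compatibilities are verified, the remaining arguments are formal eigenvalue computations and the dimension count above.
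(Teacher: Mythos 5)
Your proof is correct, but it takes a genuinely different route from the paper for the key points. The paper \emph{defines} $H_{(\pm 1)}$ and $H_{(\pm 1)}^{\vee}$ as the images of the cohomological eigenspaces under the Poincar\'e duality isomorphisms $\mathsf{PD}$ and $\mathsf{PD}^{\vee}$, so the decompositions are immediate, and then derives \emph{both} the annihilator characterization and the $I_h$-orthogonality in one stroke from the single compatibility identity $I_h([\si],[\si'])=\int_{\si}T(u)\,\mathsf{PD}([\si'])=I_c(\mathsf{PD}([\si']),\mathsf{PD}^{\vee}([\si]))$, which transfers the already-proved $I_c$-orthogonality to homology; the involution on homology is never constructed. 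You instead take the annihilator description as the definition (making the characterization tautological), get the decomposition from a dimension count via non-degeneracy of the period pairing, and prove orthogonality by lifting $\iota$ to the twisted chains, identifying $H_{(\vep)}^{\vee}$ and $H_{(\vep)}$ with eigenspaces of $\iota_*^{\vee}$ and $\iota_*$, and using $\iota$-invariance of $I_h$. Both arguments are sound. What the paper's route buys is brevity: everything reduces to the cohomological statement already in hand, at the cost of invoking the compatibility of Poincar\'e duality with the period pairing and $I_h$. What your route buys is independence from Poincar\'e duality, at the cost of the extra verifications you correctly flag: the canonical lift of $\iota$ to twisted chains (push forward a loaded simplex $\si\otimes T_{\si}$ to $\iota(\si)\otimes(T_{\si}\circ\iota)$, which resolves the normalization ambiguity coming from $T(-u)=\bfe(\al)T(u)$ and makes your displayed compatibility exact) and the $\iota$-invariance of $I_h$ (which holds because $\iota$ is holomorphic, hence orientation-preserving, and the pairing $T_{\si}\cdot T_{\si'}^{-1}$ of branches is preserved under simultaneous composition with $\iota$). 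With those two points verified, your eigenvalue computation and dimension count close the argument.
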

\begin{proof}
  By the isomorphisms $\mathsf{PD}:H_1(M;\CL)\overset{\sim}{\to} H^1(M;\CL)$ and 
  $\mathsf{PD}^{\vee}:H_1(M;\CL^{\vee})\overset{\sim}{\to} H^1(M;\CL^{\vee})$ 
  of Poincar\'e duality, we set 
  $H_{(\pm 1)}=\mathsf{PD}^{-1}(H^{(\pm 1)})$ and $H_{(\pm 1)}^{\vee}=(\mathsf{PD}^{\vee})^{-1}(H^{(\pm 1)\vee})$. 
  We thus obtain the decompositions. 
  By the compatibility between the period pairings and the intersection forms (cf. \cite{KY}), we have
  \begin{align*}
    &\langle [\si],[\si'] \rangle_{\h} =-\langle \mathsf{PD}([\si']),\mathsf{PD}^{\vee}([\si])\rangle_{\ch} ,\\
    &\langle [\vph ],\mathsf{PD}^{\vee}([\si])\rangle_{\ch} =-\int_{\si} T(u) \vph ,\quad 
    \langle \mathsf{PD}([\si']),[\vph'] \rangle_{\ch} =\int_{\si'} T(u)^{-1} \vph' 
  \end{align*}
  for $[\si]\in H_1(M;\CL^{\vee})$, $[\si']\in H_1(M;\CL)$, 
  $[\vph] \in H^1(M;\CL)$ and $[\vph'] \in H^1(M;\CL^{\vee})$, 
  which implies the properties of 
  $H_{(\pm 1)}^{\vee}$ and $H_{(\pm 1)}$. 
\end{proof}

\subsection{Eigenvectors with respect to the involution}
The twisted cycles $[\si_1],\dots ,[\si_4]$ do not belong to $H_{(\pm 1)}^{\vee}$. 
In this subsection, we construct a basis of $H_{(\pm 1)}^{\vee}$. 
\begin{Th}\label{th:homology-eigen}
  We set 
  \begin{align*}
    \si_{1\pm}
    &=\mp \frac{1}{2\bfe(\ga-\al)} \big(  -(1 \pm \bfe(\ga-\al)) \si_{1} +\si_{4}\big) ,\\
    \si_{2\pm}
    &=\pm \frac{1}{2\bfe(\be+\ga)}\Big(
      \frac{1 - \bfe(2\al -\ga)}{\bfe(2\al -2\ga)}\si_{1} +(1-\bfe(\ga))\si_{2} 
      +\bfe(2\be)(1 \pm \bfe(\ga-\be))\si_{3} +\bfe(\ga) \si_{4}
  \Big). 
  \end{align*}
  Then, $[\si_{1\pm}],[\si_{2\pm}] $ form a basis of $H_{(\pm 1)}^{\vee}$ that satisfy
  \begin{align*}
    \int_{\si_{1\pm}} T(u) \vph_{\pm} =\int_{\si_1} T(u) \vph_{\pm} ,\quad 
    \int_{\si_{2\pm}} T(u) \vph_{\pm} =\int_{\si_3} T(u) \vph_{\pm}
  \end{align*}
  for any $[\vph_{\pm}] \in H^{(\pm 1)}$. 
\end{Th}
Recall that $[\phi_1]$ and $[\phi_2]$ (resp. $[\phi_3]$ and $[\phi_4]$) form a basis of 
$H^{(-1)}$ (resp. $H^{(1)}$). 
If we take $\vph_{+}\in \{ \phi_3,\phi_4 \}$ and $\vph_{-}\in \{ \phi_1,\phi_2 \}$, 
then these integrals are expressed in terms of ${}_2F_1$ 
by Lemma~\ref{lem:W-integral-hypergeom}. 
\begin{proof}
  Corollary~\ref{cor:homology-decomp} and 
  the equalities (\ref{eq:integral-phi3-4}), (\ref{eq:integral-phi3-2}) in Lemma~\ref{lem:W-integral-hypergeom} 
  imply that  
  the twisted cycles 
  \begin{align*}
    \si'_{1\pm}&=\si_{4} -(1 \pm \bfe(\ga-\al)) \si_{1} , \\
    \si'_{2\pm}&=\si_{2} +\frac{1}{\bfe(2\al -2\ga)(1-\bfe(\ga))}
            \Big( (1 \pm \bfe(\al))\si_{1}
            +\bfe(2\al+2\be-2\ga)(1 \pm \bfe(\ga-\be))\si_{3} \Big)
  \end{align*}
  form a basis of $H_{(\pm 1)}^{\vee}$. 
  For any $[\vph_{\pm}] \in H^{(\pm 1)}$, a direct computation shows that
  \begin{align*}
    \int_{\si'_{1\pm}} T(u)\vph_{\pm}
    &=\mp 2\bfe(\ga-\al)\int_{\si_1}T(u)\vph_{\pm} ,  \\
    \int_{\si'_{2\pm}} T(u)\vph_{\pm}
    &=\frac{\pm 2}{\bfe(\al -2\ga)(1- \bfe(\ga))}
      \Big( \int_{\si_{1}}T(u)\vph_{\pm} 
      + \bfe(\al+\be-\ga)\int_{\si_{3}}T(u)\vph_{\pm} \Big) .
  \end{align*}
  Therefore, the twisted cycles 
  \begin{align*}
    \si_{1\pm}
    =\mp \frac{1}{2\bfe(\ga-\al)} \si'_{1\pm} ,\quad
    \si_{2\pm}
    =\pm \frac{1-\bfe(\ga)}{2\bfe(\be+\ga)}\Big(
    \si'_{2\pm}+\frac{\bfe(\ga)}{1- \bfe(\ga)} \si'_{1\pm}
    \Big)     
  \end{align*}
  satisfy the required properties. 
\end{proof}

\section{Twisted period relations}\label{sec:TPR}
Similarly to the argument in \cite{CM}, we have the following quadratic relations among 
the Wirtinger integrals, which is called the twisted period relations. 

We use the twisted cycles $(\si_{1-},\si_{2-},\si_{1+},\si_{2+})$ and 
cocycles $(\phi_1,\phi_2,\phi_3,\phi_4)$ as bases of the twisted homology and cohomology groups, 
respectively. 
We set 
\begin{align*}
  P_{+}=
  \begin{pmatrix}
    \int_{\si_{1-}} T(u) \phi_1 & \cdots & \int_{\si_{2+}} T(u) \phi_1 \\
    \vdots &  & \vdots \\
    \int_{\si_{1-}} T(u) \phi_4 & \cdots & \int_{\si_{2+}} T(u) \phi_4
  \end{pmatrix}
  ,\quad 
  P_{-}=  
  \begin{pmatrix}
    \int_{\si_{1-}^{\vee}} T(u)^{-1} \phi_1 & \cdots & \int_{\si_{2+}^{\vee}} T(u)^{-1} \phi_1 \\
    \vdots &  & \vdots \\
    \int_{\si_{1-}^{\vee}} T(u)^{-1} \phi_4 & \cdots & \int_{\si_{2+}^{\vee}} T(u)^{-1} \phi_4
  \end{pmatrix} ,
\end{align*}
which we call the period matrices. 
By the above discussion, the homology intersection matrix $H$, the cohomology intersection matrix $C$, and 
the period matrices $P_{+}$, $P_{-}$ become block diagonal: 
\begin{align*}
  H
  &=\begin{pmatrix}
      H(-1) & 0 \\ 0 & H(1) 
    \end{pmatrix},\  
  H(\pm 1) =\frac{1-\bfe(\ga)}{2}
    \begin{pmatrix}
      \frac{1}{(1 \mp \bfe(\ga-\al))(1 \mp \bfe(\al))} & 0 \\
      0 & -\frac{1}{(1 \mp \bfe(\ga-\be))(1 \mp \bfe(\be))}
    \end{pmatrix} ,\\
  C
  &=\begin{pmatrix}
      C(-1) & 0 \\ 0 & C(1) 
    \end{pmatrix},\  
  C(-1)=\tpi 
    \begin{pmatrix}
      0 & \frac{1}{2\al +1} \\
      \frac{1}{2\al -1} & C_{22}
    \end{pmatrix},\ 
  C(1)=\tpi
    \begin{pmatrix}
      \frac{2\ga}{2\al (2\ga -2\al)} &\frac{1}{2\al} \\
      \frac{1}{2\al} & \frac{2\be -2\al}{2\al \cdot 2\be}
    \end{pmatrix} ,\\
  P_{+}
  &=\begin{pmatrix}
      P_{+}(-1) & 0 \\ 0 & P_{+}(1)
    \end{pmatrix},\ 
  P_{+}(-1)=\Big( \int_{\si_j} T(u)\phi_i \Big)_{\substack{i=1,2 \\ j=1,3}} ,\ 
  P_{+}(1)=\Big( \int_{\si_j} T(u)\phi_i \Big)_{\substack{i=3,4 \\ j=1,3}} . 
\end{align*}
Note that $P_{-}$ is obtained by replacing $(\al,\be,\ga)$ with $(-\al,-\be,-\ga)$ in $P_{+}$. 
By Lemma~\ref{lem:W-integral-hypergeom} and Theorem~\ref{th:homology-eigen}, 
each entry of $P_{\pm}(\pm 1)$ can be expressed in terms of ${}_2F_1$. 

The twisted period relation $C =P_{+} \cdot \tp H^{-1} \cdot \tp P_{-}$ is decomposed into 
two relations among $2\times 2$ matrices: 
\begin{align}
  \label{eq:TPR-2x2}
  C(\pm 1) =P_{+}(\pm 1) \cdot \tp H(\pm 1)^{-1} \cdot \tp P_{-}(\pm 1) .
\end{align}
Similarly to \cite[Remark 2]{Watanabe-wirtinger-diff-eq}, these relations are essentially
the same as the twisted period relations for ${}_2 F_1$ obtained in \cite{CM}. 
For example, we obtain the following quadratic relation. 
Recall that the intersection number $C_{22}$ has a complicated expression (\ref{eq:C22-1}) by theta functions and 
it is reduced to a simple form (\ref{eq:C22-2}) by using $\la (\tau)$. 
\begin{Cor}
  The $(2,2)$-entry of the relation for $C(-1)$ in (\ref{eq:TPR-2x2}) is reduced to 
  \begin{align}
    \nonumber
    &(a-b+1)\la(\tau)+c \\
    \nonumber
    &=c\cdot 
      {}_2F_1 (a ,b ,c ;\la(\tau))
      {}_2F_1 (-a-1 ,-b+1 ,-c ;\la(\tau)) \\
    \label{eq:TPR-(2,2)}
    &\qquad 
      +\frac{a(a +1)(c-b)(c-b+1)}{c (1+c)(1-c)}
      \cdot \la(\tau)^2 \cdot
      {}_2F_1 (a+2 ,b ,2+c ;\la(\tau))
      {}_2F_1 (-a+1 ,-b+1 ,2-c ;\la(\tau)) ,
  \end{align}
  by setting $\al=a+\frac{1}{2}$, $\be=b-\frac{1}{2}$, $\ga=c$. 
\end{Cor}

\appendix
\section{Direct proof of (\ref{eq:TPR-(2,2)})}\label{sec:direct-proof}
In fact, we can prove (\ref{eq:TPR-(2,2)}) in terms of twisted (co)homology theory for 
the Euler-type integral representation (\ref{eq:gauss-integral}). 
We consider the twisted cohomology group $H^1(X;\CL_0)$ and homology group $H^1(X;\CL_0^{\vee})$
associated with $U(t)=t^{a} (1-t)^{c-a} (1-zt)^{-b}$, 
see Remark~\ref{rem:double-cover}. 
We set 
\begin{align*}
  p_{1+}&=\int_0^1 U(t)\frac{dt}{t(1-t)} ,\quad 
  p_{2+}=\int_{\frac{1}{z}}^{\infty} U(t) \frac{dt}{t(1-t)} , \\
  p_{1-}&=\int_0^1 U(t)^{-1} \frac{dt}{t^2(1-zt)} ,\quad 
  p_{2-}=\int_{\frac{1}{z}}^{\infty} U(t)^{-1} \frac{dt}{t^2(1-zt)}.
\end{align*}
Then, the twisted period relations for ${}_2F_1$ (e.g., \cite{CM}) imply 
\begin{align*}
  &\left\langle  \frac{dt}{t(1-t)} , \frac{dt}{t^2(1-zt)}\right\rangle_{\ch,0}  \\
  &=\frac{(1-\bfe(a))(1-\bfe(c-a))}{1-\bfe(c)} \cdot p_{1+}p_{1-}
  +\frac{(1-\bfe(-b))(1-\bfe(b-c))}{1-\bfe(-c)}\cdot p_{2+}p_{2-} \\
  &=\tpi \frac{c}{a(a+1)} \cdot {}_2F_1 (a,b,c;z) \cdot {}_2F_1 (-a-1,-b+1,-c;z) \\
  &\quad 
    +\tpi \frac{(c-b)(c-b+1)}{c(1+c)(1-c)} \cdot z^2 
    \cdot {}_2F_1 (a+2,b,2+c;z) \cdot {}_2F_1 (-a+1,-b+1,2-c;z) ,
\end{align*}
where $\langle \bu,\bu \rangle_{\ch,0} :H^1(X;\CL_0)\times H^1(X;\CL_0^{\vee}) \to \C$ is the cohomology intersection form. 
On the other hand, we can compute the cohomology intersection number in the same way as in \cite{M-k-form}: 
\begin{align*}
  \left\langle  \frac{dt}{t(1-t)} , \frac{dt}{t^2(1-zt)}\right\rangle_{\ch,0} 
  =\frac{\tpi}{a(a+1)} \cdot \big( (a-b+1)z+c \big).
\end{align*}
Therefore, the right-hand side of (\ref{eq:TPR-(2,2)}) equals $(a-b+1)\la(\tau)+c$. 

\begin{Rem}
  The assertion that
  the right-hand side of (\ref{eq:TPR-(2,2)}) equals $(a-b+1)\la(\tau)+c$
  is also proved by using the power series expansion in $\la (\tau)$ and 
  Whipple's transformation formula \cite[Theorem 3.3.3]{AAR}:
  \begin{align*}
    {}_4F_3 \left(
    \begin{matrix}
      -n,a,b,c \\ d,e,f
    \end{matrix}
    ; 1 \right)
    =\frac{(e-a)_n (f-a)_n}{(e)_n (f)_n} \cdot 
    {}_4F_3 \left(
    \begin{matrix}
      -n,a,d-b,d-c \\ d,a-e-n+1,a-f-n+1
    \end{matrix}
    ; 1 \right) ,
  \end{align*}
  where $n$ is a non-negative integer and $a+b+c-n+1=d+e+f$.
  The coefficient of $\la(\tau)^n$ in
  the first term on the right-hand side of (\ref{eq:TPR-(2,2)}) is
  \begin{align}
    \nonumber
    &c\cdot \frac{(-a-1)_n (-b+1)_n}{(-c)_n n!} \cdot 
    {}_4F_3 \left(
    \begin{matrix}
      -n,b,a,1-n+c \\ 2-n+a,c,-n+b
    \end{matrix}
    ; 1 \right) \\
    \label{eq:4F3-1}
    &=c\cdot \frac{(-a-1)_n (-b+1)_n}{(-c)_n n!} 
      \cdot \frac{(c-b)_n (-n)_n}{(c)_n (-n+b)_n} \cdot 
    {}_4F_3 \left(
    \begin{matrix}
      -n,b,-n+2,1+a-c \\ 2-n+a,b-c-n+1,1
    \end{matrix}
    ; 1 \right) .
  \end{align}
  In particular, the cases of $n=0$ and $n=1$ are equal to
  $c$ and $a-b+1$, respectively.
  On the other hand, 
  the coefficient of $\la(\tau)^n$ ($n \geq 2$) in
  the second term on the right-hand side of (\ref{eq:TPR-(2,2)}) is
  \begin{align}
    \nonumber
    &\frac{a(a +1)(c-b)(c-b+1)}{c (1+c)(1-c)}
      \cdot \frac{(-a+1)_{n-2} (-b+1)_{n-2}}{(2-c)_{n-2} (n-2)!} \cdot 
    {}_4F_3 \left(
    \begin{matrix}
      -n+2,b,a+2,1-n+c \\ 2-n+a,2+c,-n+b
    \end{matrix}
    ; 1 \right) \\
    \label{eq:4F3-2}
    &=(-c)\cdot \frac{(-a-1)_n (-b+1)_{n-2}}{(-c)_n (n-2)!} 
      \cdot \frac{(c-b)_n (2-n)_{n-2}}{(c)_n (2-n+b)_{n-2}} \cdot 
    {}_4F_3 \left(
    \begin{matrix}
      -n,b,-n+2,1+a-c \\ 2-n+a,b-c-n+1,1
    \end{matrix}
    ; 1 \right) . 
  \end{align}
  Then we have $\text{(\ref{eq:4F3-1})}+\text{(\ref{eq:4F3-2})}=0$ for $n\geq 2$.
\end{Rem}

\begin{Ack}
  The authors are grateful to Professor Akihito Ebisu for pointing out the idea 
  used in Appendix~\ref{sec:direct-proof}. 
  This work was supported by JSPS KAKENHI Grant Numbers JP20K14276 and JP24K06680.
\end{Ack}

\end{document}